\newtheorem{theo}{Theorem}[section]
\newtheorem{lem}{Lemma}[section]
\newcommand{\eop}{\hspace{\fill} $\;\;\;  \Box$}
\font\uj=cmssbx10 scaled \magstep1
\DeclareMathDelimiter{\Norm}{\mathord}{largesymbols}{"3E}{largesymbols}{"3E}
\begin{document}
\title[Hyperbolicity of minimal periodic orbits]{Hyperbolicity of minimal periodic orbits}
\author[C.-Q. Cheng \& M. Zhou]{Chong-Qing CHENG and Min Zhou}
\address{Department of Mathematics\\ Nanjing University\\ Nanjing 210093,
China\\ }
\email{chengcq@nju.edu.cn, minzhou@nju.edu.cn}


\begin{abstract}  For positive definite Lagrange systems with two degrees of freedom, it is a typical phenomenon that all minimal periodic orbits are hyperbolic.
\end{abstract}
\maketitle

\section{Introduction}
The configuration manifold considered here is a two-dimensional torus $M=\mathbb{T}^2$. We assume that $L$: $TM\to\mathbb{R}$ is an autonomous $C^r$-Lagrangian with $r\ge 4$, strictly convex on each tangent fiber, namely, the Hessian matric of $L$ in $\dot x$ is positive definite for each $x\in\mathbb{T}^2$. Therefore, it makes sense to study the minimal measures by applying the Mather theory for Tonelli Lagrangian \cite{Mat}.

Let $\mathfrak{M}(L)$ be the set of Borel probability measures on $TM$ which are invariant for $\phi_L^t$, the Lagrange flow determined by $L$. For each $\mu\in\mathfrak{M}(L)$ the rotation vector $\rho(\mu)\in H_1(\mathbb{T}^2,\mathbb{R})$ is defined such that for every closed 1-form $\eta$ on $\mathbb{T}^2$ one has
$$
\langle[\eta],\rho(\mu)\rangle=\int\eta d\mu.
$$
Let $\mathfrak{M}_{\omega}(L)=\{\mu\in\mathfrak{M}(L):\rho(\mu)=\omega\}$, an invariant measure $\mu$ is called minimal with the rotation vector $\omega$ if
$$
\int Ld\mu=\inf_{\nu\in\mathfrak{M}_{\omega}(L)}\int Ld\nu.
$$
A rotation vector $\omega\in H_1(\mathbb{T}^2,\mathbb{R})$ is called resonant if there exists a non-zero integer vector $k\in\mathbb{Z}^2$ such that $\langle\omega,k\rangle=0$. For two-dimensional space, it uniquely determines an irreducible element $g\in H_1(\mathbb{T}^2,\mathbb{Z})$ and a positive number $\lambda>0$ such that $\omega=\lambda g$ if the rotation vector $\omega$ is resonant. Let $\alpha$ and $\beta$ denote the $\alpha$- and $\beta$-function of the Lagrangian $L$ respectively. The Fenchel-Legendre transformation $\mathscr{L}_{\beta}$: $H_1(\mathbb{T}^2,\mathbb{R}) \to H^1(\mathbb{T}^2,\mathbb{R})$ is defined as follows
$$
c\in\mathscr{L}_{\beta}(\omega)\ \ \iff \ \ \alpha(c)+\beta(\omega)=\langle c,\omega\rangle.
$$

Since the configuration manifold is two-dimensional, each orbit $(\gamma,\dot\gamma)$ in the support of minimal measure $\mu$ is  periodic if $\rho(\mu)$ is resonant. Indeed, if $\rho(\mu)=\lambda g$ where $g$ is irreducible and $\lambda>0$, all periodic orbits in the support share the same homology class. It is guaranteed by the Lipschitz property of Aubry set discovered by Mather.

In this paper, a periodic orbit is called {\it minimal} if there exists some cohomology class $c\in H^1(\mathbb{T}^2,\mathbb{R})$ such that the periodic orbit is located in the support of the $c$-minimal measure. The main result of this paper is the following

\begin{theo}\label{mainth}
There exists a residual set $\mathfrak{P}\subset C^r(\mathbb{T}^2,\mathbb{R})$ with $r\ge4$ such that for each $P\in\mathfrak{P}$ and all $c\in\{\mathscr{L}_{\beta}(\lambda g):\forall\,\lambda\ge 0,\forall\,g\in H_1(\mathbb{T}^2,\mathbb{Z})\}$, all minimal periodic orbits of the flow determined by $L+P-\langle c,\dot x\rangle$ are hyperbolic.
\end{theo}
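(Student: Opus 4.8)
The plan is to separate the statement into a soft reduction and a hard genericity argument. First I would fix $c=\mathscr{L}_{\beta}(\lambda g)$ and a minimal periodic orbit $(\gamma,\dot\gamma)$ in the support of the $c$-minimal measure, and use that on a surface a periodic orbit of a two–degrees–of–freedom Tonelli system has a transverse Poincar\'e return map lying in $SL(2,\mathbb{R})$, since the energy level is three–dimensional and the flow direction can be quotiented out. Such an orbit is hyperbolic precisely when $|\mathrm{tr}|\neq 2$ and parabolic (degenerate) when $|\mathrm{tr}|=2$, i.e. when $\pm1$ is an eigenvalue. The first key step is to show that a minimal orbit can never be elliptic: being action–minimizing in its homology class, the orbit and all its iterates carry a nonnegative second variation, so there are no conjugate points along the full bi–infinite orbit. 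This produces the two Green Lagrangian subbundles as limits of Jacobi fields vanishing at $\pm\infty$; they are invariant under the linearized flow and hence give a real invariant line for the return map, forcing the eigenvalues to be real, so the orbit is never elliptic. Consequently hyperbolicity is equivalent to non–degeneracy, and it suffices to prove that generically no minimal periodic orbit has $1$ (or $-1$) as an eigenvalue of its transverse return map.

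Next I would set up the Baire–category scaffolding. The admissible set $\{\mathscr{L}_{\beta}(\lambda g)\}$ is indexed by a countable family of primitive classes $g\in H_1(\mathbb{T}^2,\mathbb{Z})$ and, for each $g$, by $\lambda\ge0$. I would enumerate the $g$'s and cover $(0,\infty)$ by a countable family of compact windows $W$, and define $\mathfrak{P}_{g,W}$ to be the set of $P\in C^r(\mathbb{T}^2,\mathbb{R})$ for which every minimal periodic orbit with rotation vector $\lambda g$, $\lambda\in W$, is hyperbolic; the target set is the countable intersection $\mathfrak{P}=\bigcap_{g,W}\mathfrak{P}_{g,W}$, which is residual as soon as each $\mathfrak{P}_{g,W}$ is open and dense. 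For openness I would use that on a compact window the minimal orbits have uniformly bounded period and lie in a fixed compact region of $TM$; the Aubry/Mather set depends upper–semicontinuously on $(P,\lambda)$, and a hyperbolic minimal orbit is isolated and persists, so if all minimal orbits over $W$ are hyperbolic this survives a $C^r$–small change of $P$.

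The density of $\mathfrak{P}_{g,W}$ is the technical heart. Given $P$ with a degenerate minimal orbit, I would construct a $C^r$–small potential $V\ge0$ supported in a thin tube around the (Lipschitz) projection of the relevant minimal orbits and vanishing exactly on them, with a prescribed strictly positive transverse quadratic profile. Since $V\ge0$ with equality on the orbits, their action is unchanged while every competitor's action strictly increases, so they remain minimizing for $L+V+P-\langle c,\dot x\rangle$; meanwhile the transverse quadratic term enters the Jacobi equation of the perturbed system and moves $|\mathrm{tr}|$ of the return map strictly away from $2$, making the orbits hyperbolic. A transversality/Sard–type argument then removes, by an arbitrarily small further perturbation, the remaining degeneracies occurring over the compact window $W$.

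The step I expect to be the main obstacle is exactly this uniformity over the continuum of rotation vectors $\lambda g$, $\lambda\in W$, together with the possibility that at some $\lambda$ the minimal orbits form a whole invariant annulus foliated by parabolic periodic orbits, as happens in the integrable case. One cannot treat such $\lambda$ one at a time, since they are uncountable; instead the localized perturbation $V$ must break the entire annulus at once, collapsing it onto finitely many hyperbolic orbits, while $C^r$–smallness guarantees that hyperbolicity already achieved at other parameters in $W$ is not destroyed. Making this simultaneous surgery precise — controlling how the Mather set and the return maps vary as $\lambda$ sweeps $W$, and verifying that the set of ``degenerate'' parameters is genuinely removable rather than persistent — is where the real work lies.
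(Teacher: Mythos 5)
Your high-level architecture matches the paper's: reduce to a countable intersection over primitive classes $g$ and compact energy (equivalently $\lambda$) windows, prove each piece open and dense, and use that for a minimal orbit hyperbolicity is equivalent to non-degeneracy. Your route to that last equivalence (no conjugate points, Green bundles, real spectrum of the return map) is a legitimate alternative to the paper's, which instead reduces to a one-degree-of-freedom time-periodic system via the energy, encodes the orbit as a non-degenerate minimum of a scalar action function $F(\cdot,E)$ on the circle, and derives hyperbolicity from semiconcavity of the barrier $u^--u^+$ (Theorem \ref{theo3}). The openness discussion is also essentially the paper's.

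The genuine gap is the density step, and it is exactly the one you flag at the end: handling the uncountable family of energies in a window simultaneously. Your proposed perturbation --- a nonnegative $V$ vanishing on the degenerate orbit with a positive transverse Hessian --- can only be arranged for one orbit at a time; for every other energy $E'$ in the window the minimal curves are different, $V$ does not vanish on them, so the perturbation moves those minimizers and may create new degeneracies, and the set of degenerate energies need not be finite or even countable a priori. Invoking ``a transversality/Sard-type argument'' does not close this, because the relevant parameter is the potential itself and one needs a quantitative statement uniform in $E$. The paper's resolution, which is its main technical content and is absent from your proposal, is entirely different in character: it perturbs by an explicit four-parameter family $\sum_{\ell=1}^2(A_\ell\cos\ell x+B_\ell\sin\ell x)$ transported to $\mathbb{T}^2$ by an operator $\mathscr{T}_{E_0}$ built from the foliation of minimizers, computes that the induced change of $F(\cdot,E)$ is close to the identity on this family (formulae (\ref{nondegenerate4})--(\ref{nondegenerate5})), and then runs a measure estimate over a grid in position, in the Taylor coefficients $(a_k,b_k,c_k)$, and in the energy (using Lipschitz dependence of $F$ on $E$) to show that the parameters $(A_1,B_1,A_2,B_2)$ producing a quantitatively degenerate minimum at \emph{some} energy in the window have measure tending to zero. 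Without an argument of this kind your density claim does not go through. A secondary omission: the statement includes $\lambda=0$ and the bottom of the channel, i.e.\ classes in the flat $\alpha^{-1}(\min\alpha)$, where the minimal measure may sit on fixed points or shrinkable orbits; the paper needs a separate discussion (via Ma\~n\'e and Massart) for this case, which your reduction to rotation vectors $\lambda g$ with $\lambda$ in a compact window of $(0,\infty)$ does not cover.
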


\noindent{\bf Remark}. The genericity proved here is in the sense of Ma\~n\'e, the perturbation is applied only to potential. It is stronger than the usual genericity, where the perturbations depend both on $x$ and on $\dot x$.

Let us recall general definition of $c$-minimal curve. Let $\eta_c(x)$ denote a closed 1-form $\langle\eta_c(x),dx\rangle$ evaluated at $x$, with its first co-homology class $[\langle\eta_c(x),dx\rangle]=c\in H^1(\mathbb{T}^2,\mathbb{R})$. We introduce a Lagrange multiplier $\eta_c=\langle\eta_c(x),\dot x\rangle$ and call it closed 1-form also.  For autonomous system, a curve $\gamma$: $\mathbb{R}\to M$ is called $c$-minimal if for any $t<t'$ and any absolutely continuous curve $\zeta$: $[t,t']\to M$ such that $\zeta(t)=\gamma(t)$, $\zeta(t')=\gamma(t')$  one has
$$
\int_t^{t'}(L-\eta_c)(\gamma(t),\dot\gamma(t))dt\le\int_t^{t'}(L-\eta_c)(\zeta(t),\dot\zeta(t))dt.
$$
As the configuration manifold is two-dimensional torus, each {\it minimal} curve must be periodic if it has resonant rotation vector.

\section{Variational Set-up}
\setcounter{equation}{0}

The result is proved by variational method. Let us fix an irreducible class $g\in H_1(\mathbb{T}^2,\mathbb{Z})$ and show the hyperbolicity of minimal periodic orbits with this homological class. By adding a constant and a closed $1$-form to $L$, we assume $\alpha(0)=\min\alpha=0$.

Given an irreducible  element $g\in H_1(\mathbb{T}^2,\mathbb{Z})$, we have a channel
$$
\mathbb{C}_g=\bigcup_{\lambda>0}\mathscr{L}_{\beta}(\lambda g)\ \ \ \text{\rm or}\ \ \ \mathbb{C}_g=\bigcup_{\lambda\ge\lambda_0>0}\mathscr{L}_{\beta}(\lambda g).
$$
In former case, $\alpha(c)>\min\alpha$ for each $c\in\mathscr{L}_{\beta}(\lambda g)$ and each $\lambda>0$. In latter case, $\alpha(c)=\min\alpha$ for $c\in\mathscr{L}_{\beta}(\lambda_0g)$.
In general, $\mathbb{C}_g$ has a foliation of lines $I_{g,E}=\mathbb{C}_g\cap H^{-1}(E)$.  Indeed, as the configuration manifold is $\mathbb{T}^2$, each ergodic minimal measure is supported on a periodic orbit if the rotation vector is resonant. Let $\gamma$ be a periodic curve so that $(\gamma,\dot\gamma)$ is a periodic orbit located in the Mather set for the class $c\in I_{g,E}$, the group $H_1(\mathbb{T}^2,\gamma,\mathbb{Z})$ is an one-dimensional lattice, two generators are denoted by $g_0$ and $g'_0=-g_0$. For a homology class $g'\in H_1(\mathbb{T}^2,\gamma,\mathbb{Z})$ we consider the quantity
$$
h(g',c)=\lim_{T\to\infty}\inf_{\stackrel {\zeta(0)=\zeta(T)\in\gamma}{\scriptscriptstyle [\zeta]=g'}} \int_0^T(L-\eta_c)(\zeta(t),\dot\zeta(t))dt+T\alpha(c).
$$
The identity $h(g_0,c)=h(g'_0,c)$ holds for all $c\in I_{g,E}$ if and only if the set $I_{g,E}$ is a singleton.  In this case, there exists an invariant two-dimensional torus foliated into a family of periodic orbits with the same homological class $g$, each of them is $c$-minimal. By the result of \cite{BC}, it is generic that all minimal measures consist of at most $n+1$-ergodic components. Therefore, the set $I_{g,E}$ is a segment of line in general case. At one endpoint of the interval $I_{g,E}$ one has $h(g_0,c)=0$ and $h(g'_0,c)>0$ and at another endpoint one has $h(g'_0,c)=0$ and $h(g_0,c)>0$. Since all cohomology classes on $I_{g,E}$ share the same Mather set, it makes sense to write $\tilde{\mathcal{M}}(c)=\tilde{\mathcal{M}}(E,g)$ with $E=\alpha(c)$ and $c\in\mathbb{C}_g$.

\begin{theo}\label{theo1}
Given a class $g\in H_1(\mathbb{T}^2, \mathbb{Z})$ and a closed interval $[E_a,E_d]\subset \mathbb{R}_+$ with $E_a>\min\alpha$, there exists an open-dense set $\mathfrak{O}\subset C^{r}(\mathbb{T}^2,\mathbb{R})$ with $r\ge 4$ such that for each $P\in\mathfrak{O}$, each $E\in[E_a,E_d]$, the Mather set $\tilde{\mathcal{M}}(E,g)$ for $L+P$ consists of hyperbolic periodic orbits. Indeed, except for finitely many $E_j\in[E_a,E_d]$ where the Mather set consists of two hyperbolic periodic orbits, for all other $E\in[E_a,E_d]$ it consists exactly one hyperbolic periodic orbit.
\end{theo}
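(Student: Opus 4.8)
My strategy is to reduce the hyperbolicity assertion to a non-degeneracy property of the minimal periodic orbits, to show that this property is open in $C^r(\mathbb{T}^2,\mathbb{R})$, and then to force it by a $C^r$-small perturbation of the potential, controlling the continuum of energies $E\in[E_a,E_d]$ through a finiteness argument. For the reduction I would first record a dichotomy: a minimal periodic orbit can only be hyperbolic or degenerate. Since the system is autonomous with two degrees of freedom, I restrict the flow to the energy surface of energy $E$ and consider the transverse linearized Poincar\'e map of a periodic orbit in $\tilde{\mathcal{M}}(E,g)$, which is a symplectic $2\times2$ matrix, that is, an element of $SL(2,\mathbb{R})$. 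A $c$-minimal orbit is a global minimizer of the action for the class $c\in I_{g,E}$ and hence has no conjugate points. For an $SL(2,\mathbb{R})$ monodromy the absence of conjugate points over all times forces $\mathrm{tr}\ge2$: in the elliptic range $|\mathrm{tr}|<2$ the transverse Jacobi field rotates and produces a conjugate point after finitely many periods, whereas for $\mathrm{tr}\le-2$ it changes sign over a single period and so vanishes at two instants; both contradict minimality. Thus every minimal periodic orbit is \emph{hyperbolic} ($\mathrm{tr}>2$) or \emph{degenerate} ($\mathrm{tr}=2$), and Theorem~\ref{theo1} becomes the assertion that, generically, no minimal periodic orbit over $[E_a,E_d]$ is degenerate.

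Next I would prove openness. If all minimal periodic orbits over the compact interval $[E_a,E_d]$ are hyperbolic for a given $P$, then compactness yields a uniform spectral gap; because the Mather set $\tilde{\mathcal{M}}(E,g)$ depends upper-semicontinuously on $(E,P)$ and the hypothesis $E_a>\min\alpha$ keeps the orbits uniformly away from rest points, the hyperbolic orbits persist and no minimal orbit lying far from the existing ones can appear under a small perturbation of $P$. Hence the collection of such $P$ is open.

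The core is density together with the counting of orbits. Using the genericity result of \cite{BC} I may assume that the minimal measure has finitely many ergodic components, each carried by a periodic orbit of homology direction $g$. I would then show that the selection of minimizers is tame: after making the finitely many coincidences of the competing minimal actions transverse in $E$, the minimal orbit is unique for all but finitely many $E_j\in[E_a,E_d]$, at which exactly two periodic orbits are simultaneously minimal, while between consecutive $E_j$ the minimizer depends smoothly on $E$ and sweeps out a piece of an invariant cylinder. On such a piece the degenerate set is closed, and I would empty it with a perturbation $V$ supported in a tubular neighborhood of the relevant orbits and chosen so that the transverse second derivative of $V$, integrated along each orbit, has a definite sign, thereby pushing $\mathrm{tr}$ strictly above $2$; at the same time $V$ is arranged to vanish to high order on the orbit, or to lower its action relative to its competitors, so that the orbit remains in the Mather set and no new minimizer is created.

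The density step is where I expect the main obstacle to lie, for two linked reasons. First, the Ma\~n\'e constraint forces $V$ to depend on $x$ alone, so its effect on the transverse index form is dictated by the way the orbit --- an immersed circle of class $g$ --- sits in $\mathbb{T}^2$; near self-intersections the transverse second derivative cannot be prescribed freely, and one must verify that a definite-sign perturbation still exists. Second, and more seriously, the perturbation must succeed uniformly over the whole interval: removing degeneracy at one energy must not recreate it at another, so one has to follow the degenerate set as $E$ varies and show that, after a finite sequence of localized perturbations compatible with the cylinder structure, it becomes empty for every $E\in[E_a,E_d]$ simultaneously.
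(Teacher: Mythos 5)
Your reduction (a minimal periodic orbit, being an action minimizer, has no conjugate points, so its transverse monodromy in $SL(2,\mathbb{R})$ has $\mathrm{tr}\ge 2$ and the only alternative to hyperbolicity is degeneracy) is correct, and your openness argument is workable. The genuine gap is in the density step, and it sits exactly where you flag the ``main obstacle'': you never produce the perturbation. For one orbit at one energy the construction you sketch does work --- since $g$ is irreducible the projected minimal curve is simple, so one may take $P\ge 0$ vanishing to second order along it with positive transverse Hessian; this keeps the curve an exact solution and a minimizer while making the index form strictly more positive definite. What is missing is the passage from one energy to the continuum $[E_a,E_d]$: the orbits $\gamma_E$ sweep out a two-dimensional region of $\mathbb{T}^2$, so a single potential cannot vanish to second order along all of them simultaneously; perturbing at one energy displaces the minimizers and the degenerate set at nearby energies; and ``a finite sequence of localized perturbations'' is not an argument, because before the perturbation is made there is no a priori finiteness of the degenerate energies to induct on. The final clause of the theorem (exactly one minimal orbit except at finitely many $E_j$, where there are two) is likewise asserted via ``making the coincidences transverse'' without any mechanism for achieving this with a perturbation depending on $x$ alone.

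The paper fills precisely this hole with a quantitative scheme for which your outline has no counterpart. It reduces the energy level to a time-periodic one-degree-of-freedom system, encodes each minimal orbit as a minimum of a one-variable action $F(x,E)$, and replaces pointwise non-degeneracy by the manifestly open condition $\mathrm{Osc}_I F(\cdot,E)\ge M|I|^4$. Density then comes from the explicit four-parameter family $\sum_{\ell}(A_\ell\cos\ell x+B_\ell\sin\ell x)$, transported by the operator $\mathscr{T}_{E_0}$ so that its first-order effect on $F$ is computable via (\ref{nondegenerate4}); a grid of cuboids in the Taylor coefficients $(a_k,b_k,c_k)$, pulled back to the parameter cube through the matrices $\mathbf{M}_1,\mathbf{M}_2$ with uniformly nonvanishing determinants, combined with a covering of the energy interval by $O(d^{-4})$ subintervals on which $F$ varies by less than $\tfrac12 Md^4$, shows the bad parameter set has measure $O(\sqrt[4]{d})$ --- this is what makes one potential work for all $x$ and all $E$ at once. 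Finally, the implication from non-degeneracy to hyperbolicity is obtained not by a trace computation but by the weak-KAM comparison of Theorem \ref{theo3}: were the orbit not hyperbolic, the semi-static returns $x_i$ would converge subexponentially and $\sum_i(F(x_i)-F(x^*))$ would exceed the semiconcavity bound on the barrier $u^--u^+$. Your proposal identifies the right difficulties, but as it stands it leaves the theorem unproved.
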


This theorem will be proved by showing the non-degeneracy of the minimal point of certain action function. Toward this goal, let us split the interval into suitably many subintervals $[E_a,E_d]=\cup_{i=0}^{k}[E_i-\delta_{E_i},E_i+\delta_{E_i}]$ with suitably small $\delta_{E_i}>0$. Once the open-dense property holds for each small subinterval, then it hold for the whole interval.

Let us explain how the interval $[E_a,E_d]$ is split. In the channel, one can choose a path along which the $\alpha$-function monotonely increases. Restricted on this path, we obtain a family of Lagrangians with one parameter. By using the method of \cite{BC}, we can see that it is typical that the minimal measure is supported at most on two periodic orbits for each class on this path. Thus, the Mather set $\tilde{\mathcal{M}}(E,g)$ consists of at most two periodic orbits for each $E\in[E_a,E_d]$.

Without of losing generality, we assume $g=(0,1)$, all these minimal curves are then associated with the homological class. Restricted on the neighborhood $\mathbb{S}_{\gamma_{E_i}}\subset \mathbb{T}^2$ of a minimal curve $\gamma_{E_i}\in\mathcal{M}(E_i,g)$ for certain energy $E_i$, we introduce a configuration coordinate transformation $x=X(u)$ such that along the curve $\gamma_{E_i}$ one has $u_1=\text{\rm constant}$. In the new coordinates, the Lagrangian reads
$$
L'(\dot u,u)=L(DX(u)\dot u,X(u))
$$
which is obviously positive definite in $\dot u$. As $\gamma_E(t)$ is a solution of the Euler-Lagrange equation determined by $L$, the curve $X^{-1}(\gamma_E)(t)$ solves the equation determined by $L'$ and is minimal for the action of $L'$. As there are at most two minimal curves for each energy, the neighborhood of these two curves can be chosen not to overlap each other. Thus one can extend the coordinate transformation to the whole torus.

Let $H'$ be the Hamiltonian determined by $L'$ through the Legendre transformation. the minimal curve determines a periodic solution for the Hamiltonian equation. By construction, $\partial_{v_2}H'>0$ holds along the periodic solution which entirely stays in the energy level set $H'^{-1}(E)$. We choose suitably small $\delta_{E_i}>0$ such that for $E\in[E_i-\delta_{E_i},E_i+\delta_{E_i}]$ each minimal periodic curve in $\mathcal{M}(E,g)$ falls into the strip $\mathbb{S}_{\gamma_{E_i}}$ and $\partial_{v_2}H'>0$ holds along each minimal periodic orbit.

For brevity of notation, we still use $x$ to denote the configuration coordinates, use $L$ and $H$ to denote the Lagrangian and Hamiltonian, for which the condition $\partial_{y_2}H>0$ holds along minimal periodic orbits for $E\in[E_i-\delta_{E_i},E_i+\delta_{E_i}]$. Under such conditions, the Lagrangian as well as the Hamiltonian can be reduced to a time-periodic system with one degree of freedom when it is restricted on energy level set. The new Hamiltonian $\bar H(x_1,y_1,\tau,E)$ solves the equation $H(x_1,y_1,x_2,\bar H)=E$ with $\tau=-x_2$, from which one obtains a new Lagrangian $\bar L=\dot x_1y_1-\bar H(x_1,y_1,\tau,E)$ where $y_1=y_1(x_1,\dot x_1,\tau)$ solves the equation $\dot x_1=\partial_{y_1}\bar H(x_1,y_1,\tau)$. In the following we omit the subscript ``1", i.e.  let $(x,y,\dot x)=(x_1,y_1,\dot x_1)$ if no danger of confusion.

We introduce a function of Lagrange action $F(\cdot,E)$: $\mathbb{T}\to\mathbb{R}$:
$$
F(x,E)=\inf_{\gamma(0)=\gamma(2\pi)=x} \int_{0}^{2\pi}\bar L(d\gamma(\tau),\tau,E)d\tau.
$$
A periodic curve $\gamma$ is called the minimizer of $F$ if the Lagrange action along this curve reaches the quantity $F(\gamma(0),E)$. As there might be two or more minimizers if $x$ is not a minimal point, the function $F$ may not be smooth in global. However, we claim that it is smooth in certain neighborhood of minimal point.

To verify our claim, we let $T_i=\frac {2\pi i}m$ and define the function of action $F_i(x,x',E)$
$$
F_i(x,x',E)=\inf_{\stackrel {\gamma(T_i)=x}{\scriptscriptstyle \gamma(T_{i+1})=x'}} \int_{T_i}^{T_{i+1}}\bar L(d\gamma(\tau),\tau)d\tau.
$$
There will be two or more minimizers of $F_i(x,x',E)$ if the point $x$ is in the ``cut locus" of the point $x'$. However, the minimizer is unique if $x$ is suitably close to $x'$, denoted by $\gamma_i(\cdot,x,x',E)$. In this case, it uniquely determines a speed $v=v(x,x')$ such that $\dot\gamma_i(T_i,x,x',E)=v(x,x')$. Let $\vec{x}= (x_0,x_1,\cdots,x_{m})$ denote a periodic configuration ($x_0=x_m$), we introduce a function of action
$$
\text{\bf F}(\vec{x},E)=\sum_{i=0}^{m-1}F_i(x_i,x_{i+1},E).
$$
As $T_{i+1}-T_i$ is suitably small and the Lagrangian is positive definite in the speed, the boundary condition $\gamma(T_j)=x_j$  for $j=i,i+1$ uniquely determines the speed $v_j=\dot\gamma(T_j)$ for $j=i, i+1$.  Indeed, the function $F_i$ generates an area-preserving twist map from the time-$T_i$-section to the time-$T_{i+1}$-section $\Phi_i$: $(x_i,y_i)\to(x_{i+1},y_{i+1})$
$$
y_{i+1}=\partial_{x_{i+1}}F_i(x_{i+1},x_i), \qquad y_{i}=-\partial_{x_{i}}F_i(x_{i+1},x_i).
$$
where $y_i=\partial_{\dot x}L(x_i,v_i,T_i)$. As the Lagrangian is positive definite in $\dot x$, it implies that the initial condition $(x_i,v_i)$ smoothly depends on the boundary condition $(x_i,x_{i+1})$ in this case. Because of the smooth dependance of solution of ordinary differential equation on initial condition, the function is smooth. Obviously, each minimal point of $\text{\bf F}(\cdot,E)$ uniquely determines a $c$-minimal measure with $c\in\alpha^{-1}(E)\cap \mathbb{C}_g$, supported on a periodic orbit $(\gamma_E,\dot\gamma_E)$ with $[\gamma_E]=g$. Let $x_i=\gamma_E(T_i)$, it satisfies the discrete Euler-Lagrange equation
$$
\frac{\partial F_i}{\partial x'}(x_{i-1},x_i,E)+\frac{\partial F_{i+1}}{\partial x}(x_{i},x_{i+1},E)=0.
$$

We claim that the periodic orbit is hyperbolic if and only if the minimal point (configuration) is non-degenerate, namely, the Jacobi matrix is positive definite:
$$
\text{\bf J}=\left[\begin{matrix}
A_0 & B_0 & 0& \cdots & B_{m-1}\\
B_0 & A_1 & B_1 & \cdots & 0 \\
0 & B_1 & A_2 & \cdots & 0 \\
\vdots & \vdots & \vdots & \ddots & B_{m-2}\\
B_{m-1} & 0 & 0 & B_{m-2} & A_{m-1}
\end{matrix}\right]
$$
where
$$
A_i=\frac{\partial^2F_{i-1}}{\partial x'^2}(x_{i-1},x_i)+\frac{\partial^2F_{i}}{\partial x^2}(x_i,x_{i+1}),\ \  B_i=\frac{\partial^2F_{i}}{\partial x\partial x'}(x_{i},x_{i+1})
$$
and $x_{-1}=x_{m-1}$.

Let $\vec{x}$ be a minimal configuration of the function $\text{\bf F}(\cdot,E)$, then the Jacobi matrix of $\text{\bf F}$ at the configuration $\vec{x}$ is non-negative. As the generating function $F_i(x,x',E)$ determines an area-preserving and twist map $\Phi_i$, we have $B_i<0$. Consequently, by using a theorem in \cite{vM}, we find that the smallest eigenvalue is simple. Let $\lambda_i$ denote the $i$-th eigenvalue of the matrix, at the minimal configuration one has
$$
0\le\lambda_0<\lambda_1\le\lambda_2<\cdots\le\lambda_{m-1}.
$$
Let $\xi_i=(\xi_{i,0},\xi_{i,1},\cdots,\xi_{i,m-1})$ be the eigenvector for $\lambda_i$. By choosing $\xi_{0,0}=1$ we have $\xi_{0,i}>0$ for $1\le i<m$ (see Lemma 3.4 in \cite{An}). At the minimal configuration, we find the following matrix is positive definite:
$$
\text{\bf J}_{m-1}=\left[\begin{matrix}
A_1 & B_1 & \cdots & 0 \\
B_1 & A_2 & \cdots & 0 \\
\vdots & \vdots & \ddots & B_{m-2}\\
0 & 0 & B_{m-2} & A_{m-1}
\end{matrix}\right].
$$
If not, there will be a vector $\hat v=(v_1,\cdots,v_{m-1})\in\mathbb{R}^{m-1}\backslash\{0\}$ such that $\hat v^t\text{\bf J}_m\hat v=0$. It follows that $v^t\text{\bf J}v=0$ if we set $v=(0,\hat v)\in\mathbb{R}^m$. As the matrix $\text{\bf J}$ is non-negative, it implies that $v=\mu\xi_0$, but it contradicts the fact that all entries of $\xi_{0}$ have the same sign, either positive or negative.

In a suitably small neighborhood $\vec{U}$ of the minimal configuration, let us consider the equations
\begin{equation}\label{variationeq1}
\frac{\partial\text{\bf F}}{\partial x_i}(x_0,x_1,\cdots,x_{m-1},E)=0,\qquad \forall\ i=1,2,\cdots,m-1.
\end{equation}
Since the matrix $\{\frac{\partial^2\text{\bf F}}{\partial x_i\partial x_j}\}_{i,j=1,\cdots,m-1}=\text{\bf J}_{m-1}$ is positive definite at the minimal point, by the implicit function theorem, the equation has a unique solution $x_i=X_i(x_0,E)$ for $x_0\in U_0=\pi\vec{U}$ which is smooth. Let $\gamma$: $[0,2\pi]\to\mathbb{R}$ be a minimizer of $F(x_0)$ with $\gamma(0)=\gamma(2\pi)=x_0$, we obtain a configuration $x_i=\gamma(2i\pi/m)$. Clearly, $\partial_{x_i}\text{\bf F}=0$ holds at this configuration for each $i\ge 1$. It implies the uniqueness of the minimizer of $F(x_0,E)$ for $x_0\in U_0$. The minimal point of $F$ uniquely determines a minimal configuration of $\text{\bf F}$, thus, the function $F$ is smooth in certain neighborhood of its minimal point.

\section{Non-degeneracy of minimizers}
\setcounter{equation}{0}

In a neighborhood of the minimal point, let us study what change the function of action undergoes when the Lagrangian is under a perturbation of potential $L\to L+P$, where $P$: $\mathbb{T}^2\to\mathbb{R}$ is a potential. Let $\bar L'$ denote the reduced Lagrangian of $L+P$ and let $G=-(\partial_{y_2}H)^{-1}$, one has
$$
\bar L'=\bar L+GP+O(\|P\|^2).
$$
We denote the minimizer of $F(x,E)$ by $\gamma(t,x,E)$ as the point $x$ uniquely determines the minimizer. Let $\gamma'(t,x,E)$ and $F'(x,E)$ be the quantities defined for $\bar L'$ as the quantities $\gamma(t,x,E)$ and $F(x,E)$ defined for $\bar L$. By the definition of minimizer, we have
\begin{align*}
F'(x,E)-F(x,E)&=\int_0^{2\pi}\bar L'(d\gamma'(\tau))d\tau -\int_0^{2\pi}\bar L(d\gamma(\tau))d\tau\\
&\ge \int_0^{2\pi}G(d\gamma'(\tau),\tau)P(\gamma'(\tau))d\tau+o(\|\gamma'-\gamma\|,\|P\|),
\end{align*}
and
\begin{align*}
F'(x,E)-F(x,E)&=\int_0^{2\pi}\bar L'(d\gamma'(\tau))d\tau-\int_0^{2\pi}\bar L(d\gamma(\tau))d\tau\\
&\le \int_0^{2\pi}G(d\gamma(\tau),\tau)P(\gamma(\tau))d\tau+o(\|\gamma'-\gamma\|,\|P\|).
\end{align*}
Since the distance between these two curves $\gamma$ and $\gamma'$ is due to the perturbation $P$, we finally obtain
\begin{equation}\label{nondegenerate1}
F'(x,E)=F(x,E)+\mathscr{K}_EP(x)+\mathscr{R}_EP(x)
\end{equation}
where $\mathscr{R}_EP=o(\|P\|)$ and
$$
\mathscr{K}_EP(x)=\int_0^{2\pi}G(d\gamma(\tau,x,E),\tau)P(\gamma(\tau,x,E))d\tau.
$$
The field of smooth curves $\{\gamma(\cdot,x,E)\}$ defines an operator $P\to\mathscr{K}_EP$, which maps functions defined on $\mathbb{T}^2$ into the function space defined on $\mathbb{T}$. Obviously, both $\mathscr{K}_EP$ and $\mathscr{R}_EP$ are smooth in $x\in U_0$ and in $E$.

Unless the point $x$ is a minimizer of $F(\cdot,E)$, the curve $\gamma(\cdot,x,E)$ may have corner at $\tau=0\mod 2\pi$.
\begin{lem}\label{hyperlem1}
There exist constants $\varepsilon,\theta>0$ such that if $F(x,E)-\min F(\cdot,E)<\varepsilon$ and if $\gamma:$ $[0,2\pi]\to\mathbb{R}$ is a  minimizer of $F(x,E)$, then
\begin{equation*}\label{hyperbolicorbit2}
\|\dot\gamma(0)-\dot\gamma(2\pi)\|<\theta\sqrt{F(x,E)-\min F(\cdot,E)}.
\end{equation*}
\end{lem}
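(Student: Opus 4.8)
The plan is to reduce the claimed velocity estimate to a gradient estimate for the scalar action $F(\cdot,E)$, the bridge being the first-variation formula together with the positive definiteness of the Lagrangian. Write $A(x_0,x_1,E)$ for the two-point action from the time-$0$ section to the time-$2\pi$ section, so that $F(x,E)=A(x,x,E)$. In the neighborhood of the minimal point where the minimizer $\gamma(\cdot,x,E)$ is unique, $A$ is smooth and is exactly the type of generating function already used to produce the twist maps $\Phi_i$; its first derivatives are the boundary momenta,
\[
\partial_{x_0}A(x_0,x_1,E)=-\partial_{\dot x}\bar L(d\gamma(0)),\qquad \partial_{x_1}A(x_0,x_1,E)=\partial_{\dot x}\bar L(d\gamma(2\pi)).
\]
Differentiating $F(x,E)=A(x,x,E)$ and writing $y(\tau)=\partial_{\dot x}\bar L(d\gamma(\tau))$ for the conjugate momentum gives the momentum-jump identity $\partial_x F(x,E)=y(2\pi)-y(0)$. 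In particular, at a minimal point $\partial_xF=0$, so there the momenta agree and the minimizer is a genuine smooth periodic orbit; off the minimum the corner is measured by the gradient of $F$.

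First I would turn this momentum jump into a velocity jump. Because $\bar L$ is $2\pi$-periodic in $\tau$ and $\gamma(0)=\gamma(2\pi)=x$, both momenta are values of the single map $v\mapsto\partial_{\dot x}\bar L(x,v,0)$, taken at $\dot\gamma(0)$ and $\dot\gamma(2\pi)$. Positive definiteness of $\bar L$ in $\dot x$ makes this map a diffeomorphism whose derivative $\partial^2_{\dot x}\bar L$ is bounded below on the relevant compact set, so it is bi-Lipschitz and
\[
\|\dot\gamma(0)-\dot\gamma(2\pi)\|\le C\,\|y(0)-y(2\pi)\|=C\,\|\partial_x F(x,E)\|
\]
for a constant $C$ that can be chosen uniform in $E\in[E_a,E_d]$ by compactness. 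Thus the lemma follows once the gradient of $F$ is controlled by $\sqrt{F-\min F}$.

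For that gradient estimate I would invoke the regularity established at the end of Section~2: $F(\cdot,E)$ is smooth in a neighborhood $U_0$ of each of its (at most two) minimal points, hence $C^2$ there with Hessian bounded by some $M$ that is uniform in $E$ over the compact interval. Choosing $\varepsilon>0$ small enough that the sublevel set $\{x:F(x,E)-\min F(\cdot,E)<\varepsilon\}$ is contained in the union of these smooth neighborhoods for every $E$, I would apply the standard descent inequality for a $C^{1,1}$ function attaining its global minimum: evaluating $F$ at $x-\tfrac1M\partial_xF(x,E)$ and using $F\ge\min F$ yields
\[
\|\partial_x F(x,E)\|^2\le 2M\,\bigl(F(x,E)-\min F(\cdot,E)\bigr).
\]
Combining this with the previous display gives the assertion with $\theta=C\sqrt{2M}$. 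It is worth stressing that this inequality needs only a uniform upper bound on the Hessian and the existence of a global minimum; it does \emph{not} require the minimum to be non-degenerate, which is precisely why the present lemma can precede the non-degeneracy results.

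The main obstacle is regularity bookkeeping rather than any single hard estimate. One must make sure that the sublevel set really sits inside the smooth region $U_0$ — this is where the ``at most two minimal orbits'' structure and the smoothness of $F$ near its minima from Section~2 are used — and that the auxiliary point $x-\tfrac1M\partial_xF$ used in the descent step also stays in $U_0$, which is guaranteed because near the minima $\|\partial_xF\|$ is small so the step is short. The remaining care is to extract the constants $C$, $M$, and hence $\varepsilon,\theta$, uniformly in $E$; this is routine once one notes that all the data vary continuously over the compact interval $[E_a,E_d]$.
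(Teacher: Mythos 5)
Your proposal is correct and follows essentially the same route as the paper: the first variation of $F$ equals the boundary momentum jump, positive definiteness of $\bar L$ in $\dot x$ converts that into a lower bound by $m_L\|\dot\gamma(0)-\dot\gamma(2\pi)\|$, and a second-order Taylor/descent step bounds $\|\partial_xF\|$ by a multiple of $\sqrt{F-\min F}$. The only cosmetic difference is that you use the standard descent inequality with step $-\tfrac1M\partial_xF$ where the paper runs a contradiction with the explicit step $x'-x=\mp\sqrt{\Delta}$; the constants and the underlying estimate are the same.
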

\begin{proof}
Let us consider the derivative of $F(\cdot,E)$. As the Lagrangian is positive definite, some positive constants $m_L>0$ exist such that
$$
\frac{\partial ^2\bar L}{\partial\dot x^2}\ge m_L, \qquad \forall\ (x,\dot x)\in T\mathbb{T}^2.
$$
Since $\gamma(0,x,E)=\gamma(2\pi,x,E)=x$, one has $\partial_{x}\gamma(0)=\partial_{x}\gamma(2\pi)=1$ and
\begin{align*}
\Big|\frac{\partial F}{\partial x}\Big|&=\Big|\int_{0}^{2\pi}\Big (\frac{\partial\bar L}{\partial\dot x} (d\gamma(\tau),\tau)\frac{\partial\dot\gamma}{\partial x}+\frac{\partial\bar L}{\partial x} (d\gamma(\tau),\tau)\frac{\partial\gamma}{\partial x}\Big )d\tau\Big|\\
&=\Big|\frac{\partial\bar L}{\partial\dot x}(\dot\gamma(0),\gamma(0),0)-\frac{\partial\bar L}{\partial\dot x}(\dot\gamma(2\pi),\gamma(2\pi),2\pi)\Big|\\
&\ge m_L|\dot\gamma(0)-\dot\gamma(2\pi)|,
\end{align*}
where the second equality follows from that $\gamma$ solves the Euler-Lagrange equation. If $\frac{\partial F}{\partial x}>0$ and if the lemma does not hold, by choosing $x'-x=-\sqrt{\Delta}$ ($\Delta=F(x,E)-\min F(\cdot,E)$) we obtain from the Taylor series up to second order that
\begin{align*}
F(x',E)-\min F(\cdot,E)&=F(x',E)-F(x,E)+F(x,E)-\min F(\cdot,E)\\
&\le -\partial_{x}F(x,E)\sqrt{\Delta}+\frac M2\Delta+\Delta<0
\end{align*}
if $\theta>\frac 1{m_L}(1+\frac M2)$, where $M=\max\partial^2_{x}F$. But it is absurd. The case $\frac{\partial F}{\partial x}<0$ can be proved by choosing $x'-x=\sqrt{\Delta}$. This completes the proof.
\end{proof}

Let $x\in(x^*-\delta_{x^*},x^*+\delta_{x^*})$, where $x^*$ is the minimal point of $F(\cdot,E_0)$. As it was shown above, $\gamma(\frac{2i\pi}m,x,E_0)$ smoothly depends on $x$. Thus, we have a smooth foliation of curves in a neighborhood of the curve $\gamma(\cdot,x^*,E_0)$, the corner at $\gamma(0,x,E_0)$ approaches to zero as $F(x,E_0)\downarrow\min F(\cdot,E_0)$. For each $x$, if there is a corner at $\gamma(0,x,E_0)=\gamma(2\pi,x,E_0)$, we construct a curve $\gamma_{x}$ that smoothly connects the point $\gamma(2\pi-\delta,x,E_0)$ to the point $\gamma(\delta,x,E_0)$ with $\gamma_x(0)=x$, where $\delta>0$ is suitably small. Replacing the segment $\gamma(\cdot,x,E_0)|_{[0,\delta]\cup[2\pi-\delta,2\pi]}$ by this curve, we obtain a smooth curve $\gamma_{x}$ such that $\gamma_x(t)=\gamma(t,x,E_0)|_{[\delta,2\pi-\delta]}$ and $\gamma_x(0)=x$. Indeed, as the curve $\gamma(t,x,E_0)$ is $C^3$-smooth in $x$ and $\gamma(t,x^*,E_0)$ is also $C^3$-smooth in $t$, for small number $\varepsilon$ some $\mu_0>0$ exists such that the quantities
$$
\Big|\frac{d^k\gamma}{dt^k}(t,x,E_0)-\frac{d^k\gamma}{dt^k}(t,x^*,E_0))\Big|< \varepsilon,\qquad \forall\ x\in[x^*-\delta_{x^*},x^*+\delta_{x^*}], \ k=0,1,2,3.
$$
Let the curve $\zeta_x(\cdot)$: $[-\delta,\delta]\to\mathbb{R}$ be an interpolation polynomial of degree eight such that
$$
\frac{d^k\zeta_x}{dt^k}(t)=\frac{d^k\gamma}{dt^k}(t,x,E_0)-\frac{d^k\gamma}{dt^k}(t,x^*,E_0) \qquad \forall\ t=\pm\delta,
$$
and $\zeta_x(0)=\gamma(0,x,E_0)-\gamma(0,x^*,E_0)$, then the coefficients of the polynomial are smooth in $x$. Let $\gamma_{x}(t)=\gamma(t,x^*,E_0)+\zeta_x(t)$, we see that the foliation of the curves $\gamma_{x}$ is smooth in $x$ and as a function of $t$, $\gamma_{x}-\gamma(\cdot,x,E_0)$ is small in $C^3$-topology.

For each point $(\tau,x)\in\mathbb{S}$, there is a curve $\gamma_{x_0}$ such that $x=\gamma_{x_0}(\tau)$. It uniquely determines a speed $v=v(x,\tau)=\dot\gamma_{x_0}(\tau)$. By the construction, $v(x,\tau)$ is $C^3$-smooth in $(x,\tau)$. As $-G^{-1}=\partial_{y_2}H>0$ when it is restricted to a neighborhood of the minimal curves, both $v$ and $G$ can be approximated by $C^r$-function $v_s$ and $G_s$ in $C^3$-topology respectively i.e. $\|v-v_s\|_{C^3}<\varepsilon$ and $\|G-G_s\|_{C^3}<\varepsilon$ hold for small $\varepsilon>0$. Given a $C^r$-function $\bar P$: $\mathbb{T}\to\mathbb{R}$ we obtain a $C^r$-function $P=\mathscr{T}_{E_0}\bar P$: $\mathbb{T}^2\to\mathbb{R}$ defined by
\begin{equation}\label{nondegenerate2}
P(x,\tau)=\mathscr{T}_{E_0}\bar P(x_0)=G^{-1}_s(v_s(x,\tau),x,\tau)\bar P(x_0).
\end{equation}
By the definition, we have
\begin{equation}\label{nondegenerate3}
\mathscr{K}_E\mathscr{T}_{E_0}\bar P(x)= \int_0^{2\pi}\frac{G(d\gamma(\tau,x,E),\tau)}{G_s(v_s(\gamma (\tau,x,E),\tau),\gamma (\tau,x,E),\tau)}\bar P(x+\Delta\gamma (\tau,x,E))d\tau
\end{equation}
where $\Delta\gamma(\tau,x,E)$ is defined as follows: passing through the point $\gamma (\tau,x,E)$ there is a unique $x'$ such that $\gamma_{x'}(\tau)=\gamma(\tau,x,E)$. We set $\Delta\gamma(\tau,x,E)=x'-x$.

We introduce a set of perturbations with four parameters:
$$
\bar{\mathfrak{P}}=\Big\{\sum_{\ell=1}^2(A_{\ell}\cos \ell x+B_{\ell}\sin \ell x):\ (A_1,B_1,A_2,B_2)\in\mathbb{I}^4\Big\},
$$
where $\mathbb{I}^4=[1,2]\times[1,2]\times[1,2]\times[1,2]$. By applying the formula (\ref{nondegenerate3}) to the function $\cos\ell x$ and $\sin\ell x$  we find that
\begin{align}\label{nondegenerate4}
\mathscr{K}_E\mathscr{T}_{E_0}\cos\ell x&=u_{\ell}(x,E)\cos\ell x-v_{\ell}(x,E)\sin\ell x,\\
\mathscr{K}_E\mathscr{T}_{E_0}\sin\ell x&=u_{\ell}(x,E)\sin\ell x+v_{\ell}(x,E)\cos\ell x,\notag
\end{align}
where
\begin{align*}
u_{\ell}(x,E)&=\int_0^{2\pi}\frac{G(d\gamma(\tau,x,E),\tau)}{G_s(v_s(\gamma (\tau,x,E),\tau),\gamma (\tau,x,E),\tau)}\cos \ell\Delta\gamma(\tau,x,E)d\tau,\\
v_{\ell}(x,E)&=\int_0^{2\pi}\frac{G(d\gamma(\tau,x,E),\tau)}{G_s(v_s(\gamma (\tau,x,E),\tau),\gamma (\tau,x,E),\tau)}\sin \ell\Delta\gamma(\tau,x,E)d\tau.
\end{align*}
Let us study the dependence of the terms $u_{\ell}(x,E)$ and $v_{\ell}(x,E)$ on the point $x$. We claim that there exists constant $\theta_1>0$ as well as small numbers $\delta_{E_0}>0$ and $\delta_{x^*}>0$ such that for each $E\in (E_0-\delta_{E_0},E_0+\delta_{E_0})$, each $x\in(x^*-\delta_{x^*},x^*+\delta_{x^*})$, $j=0,1,2,3$ and $\ell=1,2$, we have
$$
|u_{\ell}(x,E)|\ge 1-\theta_1\delta, \qquad |v_{\ell}(x,E)|\le\theta_1\delta,
$$
\begin{equation}\label{nondegenerate5}
\max_{j=1,2,3}\Big\{\Big|\frac {\partial^ju_{\ell}}{\partial x^j}(x,E)\Big|,\Big|\frac {\partial^j v_{\ell}}{\partial x^j}(x,E)\Big|\Big\} \le\theta_1\delta.
\end{equation}

By the construction of the curves $\gamma_x$, for $\tau\in\mathbb{T}\backslash (-\delta,\delta)$ and for $x\in(x^*-\delta_{x^*},x^*+\delta_{x^*})$ we have
$$
\Delta\gamma(\tau,x,E_0)=0 \ \ \text{\rm and}\ \ \frac{G(d\gamma(\tau,x,E_0),\tau)}{G(v(\gamma (\tau,x,E_0),\tau),\gamma (\tau,x,E_0),\tau)}=1
$$
and $\partial_{x}^j\Delta\gamma (\tau,x,E_0)$ is small for $\tau\in(-\delta,\delta)$ and for $j=0,1,2,3$. Integrating the them over the set with Lebesgue measure $2\delta$, we find that some small $\theta_1>0$ exists such that the formulae in (\ref{nondegenerate5}) hold for $E=E_0$ with $\theta_1$ being replaced by $\theta_1/4$ if $G_s$ and $v_s$ in the formula (\ref{nondegenerate3}) are replaced by $G$ and $v$ respectively. As both $v$ and $G$ are approximated by $v_s$ and $G_s$ in $C^3$-topology, by choosing $\varepsilon>0$ suitably small, all formulae in (\ref{nondegenerate5}) hold for $E=E_0$ with $\theta_1$ being replaced by $\theta_1/2$.

For other energy $E$, let us recall the solution $x_i=X_i(x_0,E)$ of Eq. (\ref{variationeq1}) is smooth. As the map $\Phi_i$: $(x_i,y_i)\to(x_{i+1},y_{i+1})$ is area-preserving and twist, it uniquely determines the initial speed $v_0=v_0(x_0,E)$, namely, the initial speed smoothly depends on the initial position and such dependence is also smooth in the parameter $E$. As solution of ODE smoothly depends on its initial conditions, the minimal curve $\gamma(\cdot,x,E)$ of $F(x,E)$ smoothly depends on the parameters $x$ and $E$. Thus, the formulae in (\ref{nondegenerate5}) hold if the numbers $\delta_{E_0}>0$ and $\delta_{x^*}>0$ are suitably small.

\begin{theo}\label{theo2}
There exists an open-dense set $\mathfrak{O}\subset C^r(M,\mathbb{R})$ with $r\ge4$  such that for each $P\in\mathfrak{O}$ and each $E\in [E_0-\delta_{E_0},E_0+\delta_{E_0}]$, all minimizers of $F(\cdot,E)$, determined by $L+P$, are non-degenerate.
\end{theo}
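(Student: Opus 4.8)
The plan is to translate non-degeneracy into a system of equations and then to use the four-parameter family $\bar{\mathfrak{P}}$ together with a parametric transversality (Sard--Thom) argument. Throughout I work in the neighborhood $U_0$ where $F(\cdot,E)$ is smooth. First I would note that a minimizer $x$ of $F(\cdot,E)$ is degenerate exactly when $\partial_x^2F(x,E)=0$, and that for a \emph{minimizer} this forces one more vanishing. Indeed $\partial_xF(x,E)=0$ automatically, and if also $\partial_x^2F(x,E)=0$ then
$$
F(x',E)=F(x,E)+\tfrac16\partial_x^3F(x,E)(x'-x)^3+O\big((x'-x)^4\big),
$$
so $x$ cannot be a local minimum unless $\partial_x^3F(x,E)=0$ as well. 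Hence every degenerate minimizer lies in
$$
\mathcal{D}_E=\{x:\partial_xF(x,E)=\partial_x^2F(x,E)=\partial_x^3F(x,E)=0\},
$$
and it suffices to produce, for any prescribed $L+P_0$, an arbitrarily $C^r$-small perturbation after which $\mathcal{D}_E=\varnothing$ for every $E\in[E_0-\delta_{E_0},E_0+\delta_{E_0}]$.

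For the perturbation I would fix a small $\sigma>0$ and use $\sigma\,\mathscr{T}_{E_0}\bar P_A$, where $\bar P_A=\sum_{\ell=1}^2(A_\ell\cos\ell x+B_\ell\sin\ell x)\in\bar{\mathfrak{P}}$ and $A=(A_1,B_1,A_2,B_2)\in\mathbb{I}^4$. By (\ref{nondegenerate1}) the reduced action of $L+P_0+\sigma\mathscr{T}_{E_0}\bar P_A$ is
$$
\tilde F(x,E,A)=F(x,E)+\sigma\,\mathscr{K}_E\mathscr{T}_{E_0}\bar P_A(x)+\mathscr{R}(x,E,A),
$$
with $\mathscr{R}=\mathscr{R}_E(\sigma\mathscr{T}_{E_0}\bar P_A)=o(\sigma)$, and by (\ref{nondegenerate4}) the first-order term equals $\sigma\sum_{\ell}\big(A_\ell(u_\ell\cos\ell x-v_\ell\sin\ell x)+B_\ell(u_\ell\sin\ell x+v_\ell\cos\ell x)\big)$. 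Setting $\Phi(x,E,A)=(\partial_x\tilde F,\partial_x^2\tilde F,\partial_x^3\tilde F)$, the goal is to show $\Phi$ is a submersion. The parametric transversality theorem then gives, for Lebesgue-almost every $A$, that $\Phi(\cdot,\cdot,A)$ is transverse to $0\in\mathbb{R}^3$; since its domain in $(x,E)$ is two-dimensional while the target condition has codimension three, transversality forces $\Phi(\cdot,\cdot,A)^{-1}(0)=\varnothing$, hence $\mathcal{D}_E=\varnothing$ for all $E$. As $\|\sigma\mathscr{T}_{E_0}\bar P_A\|\to0$ when $\sigma\downarrow0$, choosing any good $A$ for each small $\sigma$ yields potentials arbitrarily $C^r$-close to $P_0$, proving density. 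Since the Mather set carries at most two minimal curves with disjoint neighborhoods, the construction and this argument are applied to each branch and the finitely many full-measure $A$-sets are intersected.

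The heart of the matter is the rank computation. Differentiating the first-order term in $A$ and invoking (\ref{nondegenerate5}), which gives $u_\ell=1+O(\delta)$, $v_\ell=O(\delta)$ and $\partial_x^ju_\ell,\partial_x^jv_\ell=O(\delta)$ for $j\le3$, one finds that $\partial_A\Phi$ equals $\sigma$ times
$$
M(x)=\begin{pmatrix} -\sin x & \cos x & -2\sin 2x & 2\cos 2x\\ -\cos x & -\sin x & -4\cos 2x & -4\sin 2x\\ \sin x & -\cos x & 8\sin 2x & -8\cos 2x\end{pmatrix}+O(\delta),
$$
plus the contribution $\partial_A\mathscr{R}=o(\sigma)$. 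The $3\times3$ minors of $M(x)$ on columns $\{1,3,4\}$ and on columns $\{2,3,4\}$ evaluate to $-24\sin x$ and $24\cos x$, which never vanish simultaneously, so $M(x)$ has rank $3$ for every $x$. Fixing $\delta$ small keeps the rank equal to three, and then fixing $\sigma$ small makes $\sigma M$ dominate $\partial_A\mathscr{R}$; thus $\partial_A\Phi$ is surjective onto $\mathbb{R}^3$ and $\Phi$ is a submersion. The main obstacle is precisely this uniformity: one must control the corrections from $u_\ell-1$, $v_\ell$ and their $x$-derivatives up to order three, together with the smoothness and smallness of $\mathscr{R}$ and its $A$- and $x$-derivatives, uniformly in $(x,E)$ over the whole subinterval. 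This is exactly the role of the uniform estimates (\ref{nondegenerate5}).

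It remains to check openness, which is routine. The value $\min_xF(\cdot,E)$ and its (at most two) minimizers depend upper-semicontinuously on the potential, and on the compact interval $[E_0-\delta_{E_0},E_0+\delta_{E_0}]$ the condition $\partial_x^2F>0$ at every minimizer is uniformly open; hence it persists under sufficiently small $C^r$-perturbations. Together with the density established above, this produces the desired open-dense set $\mathfrak{O}\subset C^r(M,\mathbb{R})$.
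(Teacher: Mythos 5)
Your argument is correct in its essentials, but it takes a genuinely different route from the paper. You reduce degeneracy of a minimizer to membership in the jet locus $\{\partial_xF=\partial_x^2F=\partial_x^3F=0\}$ and kill that locus by parametric (Sard--Thom) transversality, using that the codimension ($3$) exceeds the dimension of the $(x,E)$-domain ($2$). The key computation is the same in both arguments: up to the $O(\theta_1\delta)$ corrections controlled by (\ref{nondegenerate5}) and an $o(\sigma)$ remainder, the derivative in the parameters $(A_1,B_1,A_2,B_2)$ of the $3$-jet of the perturbation is the matrix ${\bf C_1}{\bf U}+{\bf C_2}$ of equation (\ref{nondegenerate8}), whose leading part has full rank $3$ at every $x$; your minors $-24\sin x$ and $24\cos x$ on columns $\{1,3,4\}$ and $\{2,3,4\}$ are correct, while the paper uses the minors $6\sin 2x_k$ and $-6\cos 2x_k$ on columns $\{1,2,3\}$ and $\{1,2,4\}$ of the same matrix. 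The paper, however, packages this rank condition quantitatively rather than through Sard: it proves the robust oscillation bound $\text{\rm Osc}_{I}F(\cdot,E)\ge M|I|^4$ by gridding the Taylor-coefficient space $(a_k,b_k,c_k)$ into cuboids, pulling the grid back to thin strips in $(A_1,B_1,A_2,B_2)$ of total measure $O(\sqrt[4]{d})$, and discretizing the energy interval into pieces of length $M_1^{-1}d^4$ using only the Lipschitz dependence of $F$ on $E$. What the paper's route buys is robustness (only $C^0$-size control of the $3$-jets and Lipschitz dependence on $E$ are needed, and the quantitative conclusion makes openness and the uniformity in $E$ immediate); what your route buys is brevity and the cleaner pointwise conclusion $\partial_x^2F>0$ at every minimizer, which is in fact the form of non-degeneracy used in the proof of Theorem \ref{theo3}. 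The price is that you need $\Phi$ to be $C^1$ jointly in $(x,E,A)$, and in particular the estimate $\partial_A\partial_x^j\mathscr{R}=o(\sigma)$ for $j\le 3$; this does not follow from (\ref{nondegenerate5}), which controls only $u_\ell$, $v_\ell$ and their derivatives, but it does follow from the smooth dependence of the constrained minimizers on the perturbation established in Section 2 (positive definiteness of $\text{\bf J}_{m-1}$ together with the implicit function theorem), so this is an omission to be filled rather than a gap that breaks the argument.
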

\begin{proof}
To show the non-degeneracy of the global minimum of $F(\cdot,E)$ located at the point $x$, we only need to verify that
\begin{equation}\label{nondegenerate6}
F(x+\Delta x,E)-F(x,E)\ge M|\Delta x|^4
\end{equation}
holds for small $|\Delta x|$, where $M=12^{-1}\max\partial^4_{x}F$. Assume $I$ is an interval, we define $\text{\rm Osc}_{I}F=\max_{x,x'}|F(x)-F(x')|$. To show the non-degeneracy, it is sufficient to verify that
$$
\text{\rm Osc}_{I}F(\cdot,E)\ge M|I|^4
$$
if the minimal point $x\in I$, where $|I|$ denotes the length of the interval.

The openness is obvious of $\mathfrak{P}$. To show the density, we are concerned only about the configurations where $F$ takes the value close to the minimum and consider small perturbations from the following set where the parameters $(A_1,B_1,A_2,B_2)$ range over the cube $\mathbb{I}^4=[1,2]\times[1,2]\times[1,2]\times[1,2]$
$$
\mathfrak{V}_E=\Big\{(\mathscr{K}_E+\mathscr{R}_E)\mathscr{T}_{E_0}\sum_{\ell=1}^2\epsilon (A_{\ell}\cos\ell x +B_{\ell}\sin\ell x):(A_1,B_1,A_2,B_2)\in\mathbb{I}^4\Big\}
$$
where each element is a function of $(x,E)$, see the formulae (\ref{nondegenerate4}). Recall that both  operators $\mathscr{K}_E$ and $\mathscr{T}_{E_0}$ are linear and  $\|\mathscr{R}_E(\epsilon P)\|=o(\epsilon)$, see the formula (\ref{nondegenerate1}) and the formula (\ref{nondegenerate2}).

We choose sufficiently large integer $K$ so that $\epsilon=\sqrt[4]{\pi/K}$ can be arbitrarily small. Let $x_k=\frac{2k\pi}K$, $I_k=[x_k-d,x_k+d]$ and $d=\pi/K$, then $\bigcup_{k=0}^{K-1}I_k=\mathbb{T}$. Restricted on each interval $I_k$, each $C^4$-function $V\in\mathfrak{V}_E$ is approximated by the Taylor series (module constant)
$$
V_k(x)=\epsilon\Big(a_k(x-x_{k})+b_k(x-x_{k})^2+c_k(x-x_{k})^3+O(|x-x_{k}|^4)\Big).
$$

Given two points $(a_k,b_k,c_k)$ and $(a'_k,b'_k,c'_k)$, we have two functions $V_k$ and $V'_k$ in the form of Taylor series. Let $\Delta V=V'_k-V_k$, $\Delta a=a'_k-a_k$, $\Delta b=b'_k-b_k$ and $\Delta c=c'_k-c_k$, we have $\Delta V(x_k)=0$ and
\begin{align*}
&\Delta V(x_k+d)+\Delta V(x_k-d)=2\epsilon\Delta bd^2+O(\epsilon d^4),\\
&\Delta V(x_k+d)-\Delta V(x_k-d)=2\epsilon(\Delta a+\Delta cd^2)d+O(\epsilon d^4),\\
&\Delta V\Big(x_k\pm\frac 12d\Big)=\epsilon\Big(\pm\frac 12\Delta a+\frac 14\Delta bd\pm\frac 18\Delta cd^2\Big)d+O(\epsilon d^4).
\end{align*}
It follows that
\begin{equation}\label{nondegenerate7}
\text{\rm Osc}_{I_k}(V'_k-V_k)\ge\frac {\epsilon}4\max\Big\{|\Delta a|,|\Delta b|d,|\Delta c|d^2\Big\} d.
\end{equation}

We construct a grid for the parameters $(a_k,b_k,c_k)$ by splitting the domain for them equally into a family of cuboids and setting the size length by
$$
\Delta a_k=12Md^{\frac{11}4},\ \ \Delta b_k=12Md^{\frac{7}4},\ \ \Delta c_k=12Md^{\frac 34}.
$$
These cuboids are denoted by $\text{\uj c}_{kj}$ with $j\in\mathbb{J}_k=\{1,2,\cdots\}$, the cardinality of the set of the subscripts is up to the order
$$
\#(\mathbb{J}_k)=N[d^{-\frac{21}4}],
$$
where the integer $0<N\in\mathbb{N}$ is independent of $d$. If $\text{\rm Osc}_{I_k}F(\cdot,E)\le Md^4$, we obtain from the formula (\ref{nondegenerate7}) that
$$
\text{\rm Osc}_{I_k}(F(x,E)+V(x))\ge 2Md^4
$$
if $V(x)=\epsilon(a(x-x_k)+ b(x-x_k)^2+c(x-x_k)^3+O(|x-x_k|^4))$ with
$$
\max\Big\{|a|d^{-\frac{11}4},|b|d^{-\frac 74}, |c|d^{-\frac 34}\Big\}\ge 12M.
$$

The coefficients $(a_k,b_k,c_k)$ depend on the parameters $(A_1,B_1,A_2,B_2)$, the energy $E$ and the position $x_k$. The gird for $(a_k,b_k,c_k)$ induces a grid for the parameters $(A_1,B_1,A_2,B_2)$, determined by the equation
\begin{equation}\label{nondegenerate8}
\left[\begin{matrix} a_k\\  b_k\\  c_k
\end{matrix}\right]=({\bf C_1}{\bf U}+{\bf C_2})
\left[\begin{matrix} A_1\\  B_1\\  A_2\\  B_2
\end{matrix}\right]\Big(1+T_{\epsilon,E,x_{k}}(A_1,B_1,A_2,B_2)\Big)
\end{equation}
where the map $T_{\epsilon,E,x_{k}}$: $\mathbb{R}^4\to\mathbb{R}^3$ is as small as of order $O(\epsilon)$,
$$
{\bf C_1}=\left[\begin{matrix}
-\sin x_{k} &  \cos x_{k}  & -2\sin 2x_{k} &  2\cos 2x_{k}\\
-\cos x_{k} & -\sin x_{k}  & -4\cos 2x_{k} & -4\sin 2x_{k} \\
 \sin x_{k} & -\cos x_{k}  &  8\sin 2x_{k} & -8\cos 2x_{k}
\end{matrix}\right],
$$
$$
{\bf U}=\text{\rm diag}\left\{
\left[\begin{matrix}
u_1(x_{k}) & v_1(x_{k})\\
-v_1(x_{k}) & u_1(x_{k})
\end{matrix}\right],
\left[\begin{matrix}
 u_2(x_{k}) & v_2(x_{k})\\
-v_2(x_{k}) & u_2(x_{k})
\end{matrix}\right]\right\},
$$
each entry of ${\bf C_2}$ is a linear function of $\partial^{j}_{x}u_{\ell}\cos \ell x_{k}$, $\partial^{j}_{x}v_{\ell}\cos\ell x_{k}$, $\partial^{j}_{x}u_{\ell}\sin\ell x_{k}$ and $\partial^{j}_{x}v_{\ell}\sin\ell x_{k}$ with $j=1,2,3$, $\ell=1,2$. Both matrices ${\bf U}$ and ${\bf C}_2$ depend on the energy $E$, ${\bf U}$ is close to the identity matrix.  Let ${\bf M_1}$ be the matrix composed by the first three columns of ${\bf C_1}{\bf U}+{\bf C_2}$, ${\bf M_2}$ be the matrix composed by the first, the second and the fourth column of ${\bf C_1}{\bf U}+{\bf C_2}$. As we are concerned about those positions where $F$ takes value close to the minimum and about the energy $E$ close to $E_0$, in virtue of (\ref{nondegenerate5}) we obtain
\begin{align*}
\text{\rm det}({\bf M}_1)(x_k)&=6\sin 2x_{k}(1-O(\theta_1\delta)), \\
\text{\rm det}({\bf M}_2)(x_k)&=-6\cos 2x_{k}(1-O(\theta_1\delta)).
\end{align*}
As $\inf_{x_{k}}\{|{\rm det}{\bf M}_1(x_k)|,|{\rm det}{\bf M}_2(x_k)|\}= 3\sqrt{2}(1-O(\theta_1\delta))$, the grid for $(a_k,b_k,c_k)$ induces a grid for $(A_1,B_1,A_2,B_2)$ which contains as many as $N_1[d^{-\frac{21}4}]$ 4-dimensional strips ($N_1>0$ is independent of $d$). Note that the induced partition for $(A_1,B_1,A_2,B_2)$ depends on the energy.

Given an energy $E\in[E_0-\delta_{E_0},E_0+\delta_{E_0}]$, if there exist Taylor coefficients $(a_k,b_k,c_k)$ which determines a perturbation $V$ such that
$$
\text{\rm Osc}_{I_k}(F(\cdot,E)+V)\le Md^{4}
$$
then for $(a'_k,b'_k,c'_k)$ which determines a perturbation $\Delta V'$  one obtains from the formula (\ref{nondegenerate7}) that
\begin{equation}\label{nondegenerate9}
\text{\rm Osc}_{I_k}(F(\cdot,E)+V')\ge 2Md^{4}
\end{equation}
provided
$$
\max\Big\{\frac{|a_k-a'_k|}{12Md^{\frac{11}4}},\frac{|b_k-b'_k|}{12Md^{\frac 74}}, \frac{|c_k-c'_k|}{12Md^{\frac 34}}\Big\}\ge 1.
$$

Under the map defined by the formula (\ref{nondegenerate8}), the inverse image of a cuboid $\text{\uj c}_{k}$ with the size $24Md^{\frac{11}4}\times24Md^{\frac{7}4}\times24Md^{\frac{3}4}$ is a strip in the parameter space of $(A_1,B_1,A_2,B_2)$, denoted by $\text{\uj S}_{k}(E)$, with the Lebesgue measure as small as $N_1^{-1}d^{\frac{21}4}$. If the cuboid $\text{\uj c}_{k}$ is centered at $(a_k,b_k,c_k)$, then for $(a'_k,b'_k,c'_k)\notin\text{\uj c}_{k}$ the inequality (\ref{nondegenerate9}) holds.

Splitting the interval $[E_0-\delta_{E_0},E_0+\delta_{E_0}]$ equally into small sub-intervals $I_{E,j}$ with the size $|I_{E,j}|=M_1^{-1}d^4$, we obtain as many as $[M_1d^{-4}]$ small intervals. As the function $F$ is Lipschitz in $E$, suitably large positive number $M_1$ can be chosen so that
$$
\max_{x\in I_k}|F(x,E)-F(x,E')|<\frac 12Md^4, \qquad \forall\ E,E'\in I_{E,j}.
$$
Therefore, for $V\in\mathfrak{V}_E$ with $(\Delta A_1,\Delta B_1,\Delta A_2,\Delta B_2)\notin\text{\uj S}_{k}(E)$, one has
\begin{equation}\label{nondegenerate10}
\text{\rm Osc}_{I_k}(F(\cdot,E)+\Delta V')\ge Md^{4}.
\end{equation}
Pick up one energy $E_{j}$ in each small interval $I_{E,j}$, there are $[M_1d^{-4}]$ strips $\text{\uj S}_{k}(E_j)$. Finally, by considering all small intervals $I_k$ with $k=0,1,\cdots K-1$, we find
$$
\text{\rm meas}\Big(\bigcup_{k,j}\text{\uj S}_{k}(E_j)\Big)\le M_1N_1^{-1}\sqrt[4]{d}.
$$
Let $\text{\uj S}^c=\mathbb{I}^4\backslash\cup_{j,k}\text{\uj S}_{kj}(E_j)$, we obtain the Lebesgue measure estimate
$$
\text{\rm meas}(\text{\uj S}^c)\ge 1-M_1N_1^{-1}\sqrt[4]{d}\to 1,\qquad \text{\rm as}\ \ d\to 0.
$$
Obviously, for any $(A_1,B_1,A_2,B_2)\in\text{\uj S}^c$, any $E\in [E_0-\delta_{E_0},E_0+\delta_{E_0}]$ and any $k=1,2,\cdots,K$ the formula (\ref{nondegenerate10}) holds. This proves that it is open-dense that all minimal points of $F(\cdot,E)$ are non-degenerate when the energy ranges over the interval $[E_0-\delta_{E_0},E_0+\delta_{E_0}]$.
\end{proof}

\section{Hyperbolicity}
\setcounter{equation}{0}

Let $x^*$ be a minimal point of the function $F(\cdot,E)$ and let the curve $\gamma(\cdot,x^*,E)$: $\mathbb{T}\to\mathbb{R}$ be the minimizer of $F(x^*,E)$ which is smooth and determines a periodic orbit $(\tau,\gamma(\tau),\frac d{d\tau}\gamma(\tau))$ of the Lagrange flow $\phi^{\tau}_{\bar L}$. Back to the autonomous system, it determines a periodic orbit $(\gamma_1(t),\dot\gamma_1(t),\gamma_2(t),\dot\gamma_2(t))$ of the Lagrange flow $\phi_L^t$, where $\gamma_2(t)=-\tau$, $\gamma_1(t)=\gamma(\gamma_2(t))$.
\begin{theo}\label{theo3}
If $x^*$ is a non-degenerate minimal point of the function $F(\cdot,E)$, then the periodic orbit $\gamma(\cdot,x^*,E)$ is hyperbolic.
\end{theo}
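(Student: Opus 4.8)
The plan is to reduce hyperbolicity to a single inequality on the trace of the monodromy matrix and then read that trace off from the second derivative of $F$. First I would observe that the composition $\Phi=\Phi_{m-1}\circ\cdots\circ\Phi_0$ is precisely the Poincar\'e return map of the periodic orbit in the reduced time-$2\pi$-periodic one-degree-of-freedom system, so that $\gamma(\cdot,x^*,E)$ corresponds to the fixed point $(x^*,y^*)$ of $\Phi$. Its linearization $M=D\Phi(x^*,y^*)$ is the product of the symplectic matrices $D\Phi_i$, hence $M\in SL(2,\mathbb{R})$ and the reduced orbit is hyperbolic exactly when $|\operatorname{tr}M|>2$. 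Since the reduction to $\bar H$ was carried out on the energy level $H^{-1}(E)$, hyperbolicity of this return map is equivalent to hyperbolicity of the periodic orbit of the autonomous flow $\phi^t_L$ on its energy surface, which is the assertion of the theorem.

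Next I would eliminate the interior variables. Because $\mathbf{J}_{m-1}$ is positive definite at the minimal configuration (Section 2), for each $x_0$ near $x^*$ the equations $\partial_{x_i}\mathbf{F}=0$, $i=1,\dots,m-1$, have a unique smooth solution, and the reduced function $h(x_0,x_m)$ obtained by minimizing $\sum_i F_i$ over the interior points is smooth and generates the full return map $\Phi$. The composition of the positive twist maps $\Phi_i$ (recall $B_i<0$) is again a positive twist map, so its generating function satisfies $h_{12}:=\partial_{x_0}\partial_{x_m}h<0$. On the diagonal $h(x_0,x_0)$ is the reduced periodic action, which coincides with $F(x_0,E)$ near $x^*$; taking the Schur complement of the $\mathbf{J}_{m-1}$-block of $\mathbf{J}$ yields the identity
\[
F''(x^*,E)=\big(h_{11}+2h_{12}+h_{22}\big)\big|_{x^*}=\frac{\det\mathbf{J}}{\det\mathbf{J}_{m-1}},
\]
where $h_{11}=\partial^2_{x_0}h$ and $h_{22}=\partial^2_{x_m}h$.

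Then I would invoke the standard trace formula for a twist map: linearizing the defining relations $y=-\partial_{x_0}h$, $y'=\partial_{x_m}h$ at the fixed point gives $\operatorname{tr}M=-h_{12}^{-1}(h_{11}+h_{22})$, so that
\[
\operatorname{tr}M-2=-h_{12}^{-1}\big(h_{11}+2h_{12}+h_{22}\big)=-h_{12}^{-1}\,F''(x^*,E).
\]
Non-degeneracy of the minimal point means that the semi-definite matrix $\mathbf{J}$ is in fact definite, i.e. $\det\mathbf{J}>0$, equivalently $F''(x^*,E)>0$ by the identity above. Together with $-h_{12}^{-1}>0$ from the twist condition, this forces $\operatorname{tr}M>2$, so $M$ has two real positive eigenvalues $\mu>1>\mu^{-1}>0$ and the orbit is hyperbolic.

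The main obstacle is the bridge between the dynamical object $M$ and the variational object $\mathbf{J}$: one must justify that the return map genuinely reduces to a single smooth twist map after eliminating the interior variables (resting on $\mathbf{J}_{m-1}>0$), check that the composed generating function retains the negative twist $h_{12}<0$, and establish both the trace formula and the Schur-complement identity $F''(x^*,E)=\det\mathbf{J}/\det\mathbf{J}_{m-1}$ cleanly; once these three algebraic facts are in place the conclusion is immediate. One remaining point is that the non-degeneracy produced by Theorem \ref{theo2} (the quartic oscillation bound with constant $M=\tfrac1{12}\max\partial_x^4F$) indeed gives $F''(x^*,E)>0$ and not merely a flat quartic minimum: were $F''(x^*,E)=0$, the local growth would be at most $\tfrac1{24}\max\partial_x^4F\,|\Delta x|^4$, which is strictly smaller than the required $\tfrac1{12}\max\partial_x^4F\,|\Delta x|^4$ for small $|\Delta x|$, a contradiction.
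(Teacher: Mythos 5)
Your argument is a genuinely different proof from the one in the paper and, apart from one step discussed below, it is correct. The paper never touches the monodromy matrix: it takes a weak-KAM route, following the forward and backward semi-static curves $\gamma^{\pm}_x$ through a nearby point $x$ on the section $\{\tau=0\}$, splicing them into closed loops through each intersection point $x_i$, and bounding $\sum_i\bigl(F(x_i)-F(x^*)\bigr)$ from above by the barrier function $u^-(x,0)-u^+(x,0)\le C_L(x-x^*)^2$ (semiconcavity of $u^--u^+$ at a point where it vanishes and is minimal). If the orbit were not hyperbolic the $x_i$ would converge to $x^*$ subexponentially, and quadratic growth of $F$ at the non-degenerate minimum would force the sum to be at least $\lambda_0(x-x^*)^2/(1-(1-\lambda)^2)$, a contradiction for small $\lambda$. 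Your route --- eliminate the interior variables using $\mathbf{J}_{m-1}>0$, pass to a single generating function $h$ of the return map, and read hyperbolicity off $\operatorname{tr}M-2=-h_{12}^{-1}F''(x^*,E)$ --- is the classical twist-map argument; it is purely local and algebraic, avoids semi-static curves and weak KAM solutions entirely, and gives the sharper quantitative identity $\operatorname{tr}M-2=-h_{12}^{-1}\det\mathbf{J}/\det\mathbf{J}_{m-1}$. The paper's argument is softer but needs the barrier/weak-KAM machinery. Your closing observation that the quartic bound (\ref{nondegenerate6}) with constant $\tfrac1{12}\max\partial_x^4F$ already forces $F''(x^*,E)>0$ is worth keeping, since the paper's own proof also silently uses quadratic, not merely quartic, growth at the minimum.

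The one step you cannot leave as stated is $h_{12}<0$. A composition of positive twist maps is \emph{not} in general a positive twist map (a rotation by $\pi$ is a composition of twist maps and fails the twist condition), so ``the composition of the $\Phi_i$ is again a positive twist map'' is not a valid justification, and the sign of $h_{12}$ is essential to your conclusion: with $h_{12}>0$ the same identity would only give $\operatorname{tr}M<2$. The claim is nevertheless true here, and the reason is exactly the positive definiteness of $\mathbf{J}_{m-1}$. Differentiating the interior equations $\partial_{x_i}\mathbf{F}=0$ with respect to $x_m$ and using the cofactor formula for the $(1,m-1)$ entry of the inverse of a tridiagonal matrix yields
$$
h_{12}=\frac{(-1)^{m+1}\prod_{i=0}^{m-1}B_i}{\det\mathbf{J}_{m-1}}
=-\frac{\prod_{i=0}^{m-1}|B_i|}{\det\mathbf{J}_{m-1}}<0,
$$
since every $B_i<0$ and $\det\mathbf{J}_{m-1}>0$. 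Once this computation is inserted, the Schur-complement identity and the trace formula are routine and your proof closes.
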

\begin{proof}
If a periodic orbit is hyperbolic, it has its stable and unstable manifold in the phase space. Consequently, any orbit staying on the stable (unstable) manifold approaches to the periodic orbit exponentially fast as the time approaches to positive (negative) infinity.

In a small neighborhood of the minimal periodic curve $\gamma$, each point $x$ on the section $\{\tau=0\}$ determines at least one forward (backward) semi-static curve $\gamma^+_x$: $\mathbb{R}_+\to\mathbb{T}$ ($\gamma^-_x$: $\mathbb{R}_-\to\mathbb{T}$) such that $\gamma^{\pm}_x(0)=x$. These curves determine forward (backward) semi-static orbits $d\gamma^{\pm}_x$ of which the $\omega$-set ($\alpha$-set) is the periodic orbit $d\gamma$. In the configuration space $(x,\tau)\in\mathbb{T}^2$, these two curves intersect with the section $\{\tau=0\}$ infinitely many times at the points $\gamma^+_x(2k\pi)$ and $\gamma^-_x(-2k\pi)$. These points are denoted by $x_i$, they are well ordered $\cdots\prec x_{i+1}\prec x_i\cdots\prec x_0$. It is possible that $\gamma^+_x(2k\pi)=\gamma^-_x(-2k'\pi)$. In this case, we count the point twice. For each point $x_i$, there is a curve joining $(x_i,0)$ to $(x_i,2\pi)$ which is composed by some segments of $\gamma^+_x$ as well as of $\gamma^-_x$. For instance, in the following figure, by starting from the point $(x_2,0)$ and  following a segment of $\gamma^-_x$ to the point $A$, then following a segment of $\gamma^+_x$ to the point $B$ and finally following a segment of $\gamma^-_x$ to the point $(x_2,2\pi)$, we obtain a circle. Clearly, the Lagrange action along this circle is not smaller than the quantity $F(x_2)$.
\begin{figure}[htp] 
  \centering
  \includegraphics[width=5.2cm,height=5.0cm]{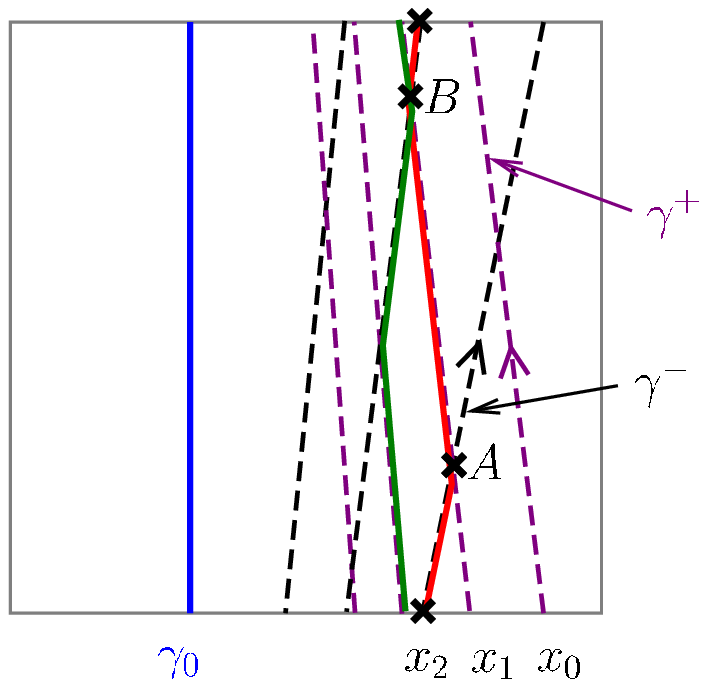}
  \label{}
\end{figure}

Let us consider the whole sequence $\{x_i\}$, we obtain infinitely many circles in that way. Therefore, the sum of the quantities $F(x_i)|_{i=0}^{\infty}$ is obviously not bigger than the total action along all of these circles
\begin{equation}\label{hypereq1}
\sum_{i=0}^{\infty}F(x_i)\le \lim_{k\to\infty}\Big\{\int_{0}^{2k\pi}L(d\gamma^+(\tau),\tau)d\tau+\int_{-2k\pi}^0L(d\gamma^-(\tau), \tau)d\tau\Big\}.
\end{equation}
The right hand side is nothing else but the barrier function valued at $x_0$.

As the periodic orbit supports the minimal measure, both $\gamma^+_x(2k\pi)$ and $\gamma^-_x(-2k\pi)$ approach the point $x^*$ where the periodic curve intersects the section $\{\tau=0\}$ as $k\to\infty$. If the periodic orbit is not hyperbolic, the sequence of $\{x_i\}$ approach $x$ slower than exponentially, i.e., for any small $\lambda>0$ there exists $\delta>0$ such that
\begin{align*}
|\gamma^+_x(2(k+1)\pi)-x^*)|&\ge (1-\lambda)|\gamma^+_x(2k\pi)-x^*)|,\\
|\gamma^-_x(-2(k+1)\pi)-x^*)|&\ge (1-\lambda)|\gamma^-_x(-2k\pi)-x^*)|,
\end{align*}
if $|\gamma^{\pm}_x(0)-x^*|\le\delta$. It follows that $|x_{i+1}-x^*|\ge(1-\lambda)|x_{i}-x^*|$. As the periodic curve is assumed non-degenerate minimizer, some $\lambda_0>0$ exists such that
\begin{equation}\label{hypereq2}
\sum_{i=0}^{\infty}(F(x_i)-F(x^*))\ge\lambda_0\sum_{i=0}^{\infty}(x_i-x^*)^2\ge \lambda_0\frac{(x_0-x^*)^2}{1-(1-\lambda)^2}.
\end{equation}
By subtracting $\min F$ from the Lagrangian $L$ we obtain that
$$
\text{\rm right-hand-side of (\ref{hypereq1})}=u^-(x,0)-u^+(x,0)
$$
where $u^{\pm}$ represents the backward (forward) weak-KAM solution. Since $u^-$ is semi-concave and $u^+$ is semi-convex, $u^--u^+$ is semi-concave. Since $(x^*,0)$ is a minimal point where $u^-(x^*,0)-u^+(x^*,0)=0$, there exists some number $C_L>0$ such that (cf. \cite{Fa})
$$
u^-(x_0,0)-u^+(x_0,0)\le C_L(x_0-x^*)^2.
$$
Comparing this with the inequality (\ref{hypereq2}), we obtain from (\ref{hypereq1}) a contradiction
$$
\lambda_0\frac{(x_0-x^*)^2}{1-(1-\lambda)^2}\le C_L(x_0-x^*)^2
$$
if $\lambda>0$ is suitably small. This proves the hyperbolicity of the periodic orbit.
\end{proof}

We are now ready to prove the main result.

\noindent{\it Proof of Theorem \ref{theo1}}. According to Theorem \ref{theo2} and \ref{theo3}, for each $E_i\in[E_a,E_d]$, a neighborhood $[E_i-\delta_{E_i},E_i+\delta_{E_i}]$ of $E_i$ and an open-dense set $\mathfrak{O}(E_i)\subset C^r(\mathbb{T}^2,\mathbb{R})$ exist such that for each $P\in\mathfrak{O}(E_i)$ and each $E\in[E_i-\delta_{E_i},E_i+\delta_{E_i}]$ each minimal orbit of $\phi_{L+P}^t$ with homological class $g$ is hyperbolic. As each $\delta_{E_i}$ is positive, there exists finitely many $E_i$ such that $[E_a,E_d]\subset\cup_i[E_i-\delta_{E_i},E_i+\delta_{E_i}]$. We take $P\in\cap\mathfrak{O}(E_i)$, the hyperbolicity for $L+P$ holds for all $E\in[E_a,E_d]$.

Once a minimal point is non-degenerate for certain $E$, by the theorem of implicit function  it has natural continuation to a neighborhood of $E$. Namely, there exists a curve of minimal points passing through this point, it either reaches to the boundary of $[E_a,E_d]$, or extends to some point $E'$ where the critical point is degenerate. Since each global minimal point is non-degenerate, the critical point becomes local minimum when it enters into certain neighborhood of $E'$. As each non-degenerate minimal point is isolated to other minimal points for the same energy $E$, there are finitely many such curves, denoted by $\Gamma_i$.

For a curve $\Gamma_i$: $I_i=(E_i,E'_i)\to\mathbb{T}$, the definition domain $I_i$ contains finitely many closed sub-intervals $I_{i,j}$ such that $F(\Gamma_i(E),E)=\min_xF(\cdot,E)$ for all $E\in I_{i,j}$. By definition, $I_{i,j}\cap I_{i,j'}=\varnothing$ for $j\ne j'$. Let $\Gamma_{i,j}=\Gamma_i|_{I_{i,j}}$, we have finitely many curves $\{\Gamma_{i,j}\}$ such that $F(\cdot, E)$ reaches global minimum at the point $x$ if and only if $x=\Gamma_{i,j}(E)$ for certain subscript $(i,j)$.

For each $E\in\partial I_{i,j}$, by the definition of $I_{i,j}$, some other subscript $(i',j')$ exists such that $F(\cdot,E)$ reaches the global minimum at the points $\Gamma_{i,j}(E)$ and $\Gamma_{i',j'}(E)$. It is obviously an open-dense property that
$$
\frac {dF(\Gamma_{i,j}(E),E)}{dE}\ne \frac {dF(\Gamma_{i',j'})(E),E)}{dE}.
$$
Thus, it is also open-dense that $[E_a,E_d]=\cup I_{i,j}$ and $[E_a,E_d]\backslash\cup\text{\rm int}I_{i,j}$ contains finitely points. This completes the whole proof.\eop

\noindent{\it Proof of Theorem \ref{mainth}}. According to Theorem \ref{theo1}, for each class $g\in H_1(\mathbb{T}^2,\mathbb{Z})$ and each $[E_a,E_d]\subset(0,\infty)$,  an open-dense set $\mathfrak{V}(g,E_a,E_d)\subset C^r(\mathbb{T}^2,\mathbb{R})$ exists such that for each $V\in\mathfrak{V}(g,E_a,E_d)$ the minimal periodic orbits of $\phi^t_{L+V}$ are hyperbolic if they stay in the Mather set $\tilde{\mathcal{M}}(E,g)$ with $E\in[E_a,E_d]$.

Let $E_a^i\downarrow 0$, $E_d^i\uparrow\infty$ be sequences of numbers. Because there are countably many homological classes in the group $H_1(\mathbb{T}^2,\mathbb{R})$, the set
$$
\mathfrak{P}=\bigcap_{\stackrel {g\in H_1(\mathbb{T}^2,\mathbb{R})}{\scriptscriptstyle i\in\mathbb{N}}}\mathfrak{V}(g,E^i_a,E^i_d)
$$
is obviously residual in $C^r(\mathbb{T}^2,\mathbb{R})$. Therefore, to complete the proof we only need to consider periodic orbits lying in the Mather set $\tilde{\mathcal{M}}(c)$ with $c\in\mathbb{F}_0=\{c\in H^1(\mathbb{T}^2,\mathbb{R}):\alpha(c)=\min\alpha\}$.

The flat $\mathbb{F}_0$ is either two dimensional disk or a line. If it is a disk, it is a typical phenomenon that the minimal measure is uniquely supported on a fixed point or a shrinkable periodic orbit for each $c\in\text{\rm int}\mathbb{F}_0$ which is guaranteed by a result of Ma\~n\'e \cite{Man} as well as of Massart \cite{Mas}. It is also typical that this fixed point or a shrinkable closed orbit is hyperbolic. Let us consider $c\in\partial\mathbb{F}_0$ such that $\tilde{\mathcal{M}}(c)$ contains periodic orbits and assume that $g$ is the homological class of the periodic orbit. As it was studied in \cite{Ch}, certain $\lambda_0>0$ exists such that the channel $\mathbb{C}_g=\cup_{\lambda\ge\lambda_0>0}\mathscr{L}_{\beta}(\lambda g)$ is connected to the flat $\alpha^{-1}(\min\alpha)$. As the speed of the periodic orbit is not zero, one can see that the proof of Theorem \ref{theo1} applies to $[\min\alpha,E_d]$. If the flat $\mathbb{F}_0$ is a line, it is obviously typical that, for each $c\in\mathbb{F}_0$, the $c$-minimal measure is supported on two periodic orbits with different rotation vector, each of them is hyperbolic.\eop

\noindent{\bf Acknowledgement} This work is supported by National Basic Research Program of China (973 Program, 2013CB834100), NNSF of China (Grant 11201222, Grant 11171146),  Basic Research Program of Jiangsu Province (BK2008013) and a program PAPD of Jiangsu Province, China.

\end{document}